\theoremstyle{plain}
\newtheorem*{theo*}{\theoname}
\theoremstyle{remark}
\newtheorem{remas}[subsection]{Remarks}
\def\abs#1{\left|#1\right|}
\def\norm#1{\left\|#1\right\|}
\def\CAP{{\mathrm{cap}}}
\def\ddc{\mathop{\mathrm d\mathrm d^c}}
\def\C{\mathbf C}
\def\R{\mathbf R}
\def\Q{\mathbf Q}
\def\Z{\mathbf Z}
\let\ra\rightarrow
\let\liminf\varliminf
\let\limsup\varlimsup
\let\phi\varphi \let\eps\varepsilon
\def\ie{\emph{ie.}\xspace}
\def\eg{\emph{eg.}\xspace}
\def\ord{\operatorname{ord}}
\def\flim{\operatornamewithlimits{fine.lim}}
\begin{document}

\title[The Theorem of Jentzsch--Szeg\H o on an analytic curve]
 {The Theorem of Jentzsch--Szeg\H o on an analytic curve. 
  Application to the irreducibility of truncations of power series}
\author{Antoine Chambert-Loir}
\begin{altabstract}
Le th\'eor\`eme de Jentzsch-Szeg\H o d\'ecrit la mesure
limite d'une suite de mesures discr\`etes
associ\'ee aux z\'eros d'une suite convenable de polyn\^omes en une variable.
Suivant la pr\'esentation que font Andrievskii et Blatt
dans~\cite{andrievskii-blatt2002}, on \'etend ici
ce r\'esultat aux surfaces de Riemann compactes,
puis aux courbes analytiques sur un corps ultram\'etrique.
On donne pour finir
quelques corollaires du cas particulier 
de la droite projective sur un corps ultram\'etrique
à l'irréductibilité des polynômes-sections d'une série entière
en une variable.
\end{altabstract}
\begin{abstract}
A theorem of Jentzsch--Szeg\H o describes the limit measure of a sequence
of discrete measures associated to zeroes of a sequence of polynomials
in one variable. 
We extend this theorem to compact Riemann surfaces and 
to analytic curves over ultrametric fields. 
This theory is applied to the problem of irreducibility
of truncations of power series with coefficients in ultrametric fields.
\end{abstract}
\subjclass{11S05, 30C15, 30G06}
\maketitle

The construction of families of irreducible polynomials 
as truncations of power series with rational coefficients
has attracted the attention of many mathematicians, 
\emph{e.g.,} Schur~\cite{schur1930}, Coleman~\cite{coleman1987}, and others.
A basic example of this phenomenon is given by the exponential
function, 
\[ \exp (T)= \sum_{j=0}^\infty \dfrac1{j!}T^j, \]
all of which truncations 
\[ f_n(T) = \sum_{j=0}^n \dfrac1{j!}T^j \]
are irreducible over~$\Q$.

In his class (autumn~2009) at Princeton University, N.~Katz asked
whether this was a general phenomenon, \emph{i.e.,} for general conditions
on the power series which would imply irreducibility of all truncations.
Alternatively, he asked for conditions which would imply reducibility.
He referred to a theorem of Jentzsch~\cite{jentzsch1916}
in complex analysis
according to which any point of the circle of convergence is a limit
point of zeroes of these truncations, 
provided the radius of convergence is finite and positive.

More generally, Szeg\H o~\cite{szego1922} proved that the
probability measures defined by zeroes of a suitable subsequence
of truncations are equidistributed on the circle of convergence.
In particular, these truncations cannot be all split over~$\R$,
let alone over~$\Q$.

Today, these theorems are understood in the context of
potential theory on the Riemann sphere
(see, \emph{e.g.}, 
the book of Andrievskii and Blatt~\cite{andrievskii-blatt2002}).

This paper was prompted by the fact that an appropriate $p$-adic analogue
of the Jentzsch--Szeg\H o theorem imply stronger
irreducibility properties of truncations of power series
whose \emph{$p$-adic} radius of convergence is finite and positive.
As a corollary of our main theorem (Theorem~\ref{theo.2})
we obtain the following result
 (see Theorem~\ref{exem-3.4}). We first recall the definition of
the (generalized) Tate algebras: for any positive real number~$R$,
$ K\{ R^{-1}T\} $ is the subalgebra of $K[[T]]$ consisting
of power series $\sum a_j T^j$ such that $\abs{a_j}R^j\ra 0$;
it is the algebra of holomorphic functions on the closed disk 
$E(0,R)=\{\abs T\leq R\}$ in the sense of Berkovich.

\begin{theo*}
Let $p$ be a prime number, $K$ a finite extension of~$\Q_p$,
$R$ a positive real number
and $f=\sum_{j=0}^\infty a_j T^j\in K\{R^{-1} T\}$.
For any nonnegative integer~$n$, 
let $f_n(T)=\sum_{j=0}^n a_jT^j$ be the truncation of~$f$ in degree~$n$.

Then, for any positive integer~$d$ and 
any subsequence~$(n_k)$ such $\abs{a_{n_k}}^{1/n_k}\ra 1/R$,
the number of $K$-irreducible factors of~$f_{n_k}$ of degree~$\leq d$
is $\mathrm o(n_k)$. In particular, the largest
degree of an irreducible factor of~$f_{n_k}$ tends to infinity
for $k\ra\infty$.
\end{theo*}

The classical example of the exponential series (which
however does not belong to the Tate algebra $\Q_p\{\abs{p}^{-1/(p-1)}T\}$)
indicates that
one cannot hope for much more in general. Indeed, the theory of
Newton polygons implies that for $f=\exp(T)$, $f_n$
has irreducible factors over~$\Q_p$ of degrees $p,p(p-1),\dots,
p^{m-1}(p-1)$, where $m$ is the largest integer
such that $p^m\leq n$.

Observe also that the existence of a subsequence~$(n_k)$
as in the Theorem implies that the radius of convergence
of~$f$ is equal to~$R$.

In the proofs, the restriction
to elements of a Tate algebra is essential. 
We explain in Remark~\ref{rema.defect} why this
is a major defect for the application to irreducibility. 
However, an easy construction (Example~\ref{contrex})
shows that the theorem
does not extend to arbitrary power series.

\medskip

This article has three parts.
First we generalize the methods of Andrieveskii and Blatt
to include compact Riemann surfaces of arbitrary genus,
see Theorem~\ref{theo.1}. The main interest of this extension
is to prepare the second part where we prove
an analogue of the Jentzsch--Szeg\H o theorem in the ultrametric
setting, \emph{i.e.,} when the compact Riemann surface is replaced by
a smooth projective analytic curve in the sense 
of Berkovich~\cite{berkovich1990}.
The non-archimedean potential theory developed by Thuillier~\cite{thuillier2005}
and Baker/Rumely~\cite{baker-rumely2010} 
is formally identical to the classical complex potential theory.
In particular, the proof of Section~1 applies verbatim.
In Section~3, we
apply this to the Berkovich projective line and deduce
our main results concerning irreducible factors
of truncations of power series over a locally compact ultrametric field.

\medskip

\paragraph*{Acknowledgments}
This paper was written during a stay 
at the Institute for Advanced Study in Princeton. 
I am very grateful to Nick Katz for asking this question,
which revived my interest in potential theory on ultrametric curves, 
and to Enrico Bombieri for many discussions on related questions.
The idea of extending the theorem of Jentzsch--Szeg\H o to curves of
arbitrary genus is due to Antoine Ducros; I would like to thank him,
Amaury Thuillier, Matt Baker, and Robert Rumely for
many stimulating exchanges concerning potential theory. 
I would also like to thank the referee for his/her
careful reading.

I was partially supported by Institut universitaire de France,
and the National Science Foundation grant DMS-0635607.

\section{Riemann surfaces}

Let $M$ be a compact connected Riemann surface.
Let $p$ be a point of~$M$ and $E$ a compact non-polar subset of~$M$;
we assume that $\Omega=M\setminus E$ is connected and contains~$p$.
Fix a local parameter~$z$ in a neighborhood of~$p$.
The Green function~$G_E$ is the unique subharmonic function
on~$M\setminus\{p\}$ such that
\begin{enumerate}
\item it vanishes on~$E$ (up to a polar subset
of~$\partial E$), 
\item it is harmonic on $M\setminus(\{p\}\cup E)$ 
\item and it has an expansion around~$p$ of the form
\[ G_E(q)=\log\abs {z(q)^{-1}/\CAP(E)}+\mathrm o(1) .\]
\end{enumerate}
The positive real number~$\CAP(E)$ is the capacity of~$E$
with respect to~$p$, relative to the local parameter~$z$.
More intrinsically, there exists a norm~$\norm{\cdot}^\CAP$ on
the complex tangent line~$T_pM$ such that $\CAP(E)$ is the norm of
the tangent vector~$\partial/\partial z\in T_pM$.
This norm does not depend on the choice of the local parameter~$z$.
The equilibrium measure of~$E$ is the probability measure
\[ \mu_E = \ddc G_E + \delta_p ;\]
it is supported on the boundary~$\partial E=E\setminus\mathring E$
of~$E$.
Finally, for any function~$f$
on~$M$, we define $\norm{f}_E=\sup_{E}\abs f$. If $f$
is holomorphic on a neighborhood of~$E$, 
then $\norm f_E=\sup_{\partial E}\abs f$ (maximum principle).

Let $k$ be a positive integer
and $f\in\Gamma(M,\mathscr O(kp))$,
a meromorphic function on~$M$, holomorphic on~$M\setminus\{p\}$
with a pole of order~$\leq k$ at~$p$.
Its leading coefficient at~$p$, $j^k(f)$, is defined as
\[ j^k(f) = \lim_{q\ra p} f(q) z(q)^k \left( \frac\partial{\partial z}\right)^{\otimes k}; \]
it is an element of~$T_pM^{\otimes k}$,
independent of the choice of the local parameter~$z$,
and vanishing if and only if the order of the pole of~$f$ at~$p$ is~$<k$.

\begin{lemm}\label{lemm.1}
Let $k$ be a positive integer and let $f$ be any non-zero element
of~$\Gamma(M,\mathscr O(kp))$.
The function $\frac1k\log\abs f-G_E$ is subharmonic on~$\Omega$.
For any point $q\in M\setminus\{p\}$, one has 
\[ \abs{f(q)}\leq \norm{f}_E \exp(k G_E(q)).\]
In particular, \[ \norm{j^k(f)}^\CAP \leq \norm{f}_E .\]
\end{lemm}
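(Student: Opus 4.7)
The natural object to study is the function $u=\frac1k\log\abs f-G_E$ on $M\setminus\{p\}$. Away from $p$ and away from the zeros of~$f$, both $\log\abs f$ (the modulus of a holomorphic function) and $G_E$ are genuine functions; more importantly $G_E$ is harmonic on $M\setminus(\{p\}\cup E)$, so $u$ is subharmonic on $\Omega\setminus\{p\}$ as the difference of a subharmonic function and a harmonic one. Near~$p$, write $f(q)=z(q)^{-k}(j^k(f)\cdot (\partial/\partial z)^{-\otimes k}+\mathrm o(1))$ if the pole of~$f$ at~$p$ has maximal order~$k$, and compare with the defining expansion $G_E(q)=-\log\abs{z(q)}-\log\CAP(E)+\mathrm o(1)$: the two $-\log\abs{z(q)}$ contributions cancel, so $u$ is bounded above near~$p$ and therefore extends subharmonically across~$p$ by the classical removable singularity theorem for subharmonic functions (if the order of the pole is strictly less than~$k$, the same argument gives $u\to-\infty$ at~$p$, which is equally admissible).

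Once subharmonicity of~$u$ on the whole of~$\Omega$ is established, the first estimate is a direct application of the maximum principle. The boundary of~$\Omega$ inside~$M$ is $\partial E$; on $\partial E$, outside of a polar subset, $G_E$ vanishes while $f$ is holomorphic, so $\limsup_{q\to\zeta} u(q)=\frac1k\log\abs{f(\zeta)}\leq\frac1k\log\norm f_E$. The maximum principle for subharmonic functions on a relatively compact domain of a Riemann surface, together with the fact that the exceptional polar set may be ignored, yields $u\leq\frac1k\log\norm f_E$ everywhere on~$\Omega$, which, exponentiated, gives the desired pointwise bound $\abs{f(q)}\leq\norm f_E\exp(kG_E(q))$ on $\Omega$. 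On $E\setminus\{p\}$ the inequality is trivial since $G_E\geq 0$ there and $\abs f\leq\norm f_E$, so the bound holds on all of $M\setminus\{p\}$.

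For the last assertion, specialize the inequality by multiplying by $\abs{z(q)}^k$ and letting $q\to p$. The left-hand side tends to the modulus of the leading coefficient $j^k(f)$ expressed in the trivialization $(\partial/\partial z)^{\otimes k}$, while, using the expansion of $G_E$, the factor $\abs{z(q)}^k\exp(kG_E(q))$ tends to $\CAP(E)^{-k}$. Passing to the limit and recalling that, by definition, $\norm{(\partial/\partial z)^{\otimes k}}^\CAP=\CAP(E)^k$, one obtains precisely $\norm{j^k(f)}^\CAP\leq\norm f_E$.

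The only delicate point is the first paragraph, namely checking that $u$ is genuinely subharmonic in a neighborhood of~$p$: one must verify that the logarithmic pole of $G_E$ is calibrated exactly so as to cancel the pole of $\frac1k\log\abs f$, and this cancellation is precisely what the normalization involving $\CAP(E)$ is designed to give. Everything else is standard potential-theoretic bookkeeping, and this is the step whose formal analogue in the non-archimedean setting will later let the same argument be transposed verbatim.
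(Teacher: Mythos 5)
Your proposal is correct and follows essentially the same route as the paper: subharmonicity of $\frac1k\log\abs f-G_E$ away from~$p$, an extension across~$p$ using the cancellation of the logarithmic singularities (the paper phrases this as the local decomposition into $\frac1k\log\abs{fz^k}$ plus a harmonic term rather than invoking the removable singularity theorem, but the content is identical), then the maximum principle with fine limits to handle the polar exceptional set, and finally the evaluation at~$p$ via the expansions of $f$ and $G_E$ to get $\norm{j^k(f)}^\CAP\leq\norm f_E$. The only cosmetic difference is that you deduce the last inequality by passing to the limit in the pointwise bound, whereas the paper computes $\phi(p)$ directly; both are fine.
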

\begin{proof}
Set $\phi=\frac 1k\log\abs{f}- G_E$.
The function~$\phi$ is subharmonic on~$\Omega\setminus\{p\}$, 
since on this set, $\log\abs f$ is subharmonic and $G_E$ is harmonic.
In fact, it is subharmonic on~$\Omega$ since, 
after choosing a local parameter~$z$ at~$p$, 
$q\mapsto f(q)z(q)^k$ is holomorphic in a neighborhood of~$p$.
By the maximum principle for subharmonic functions
of~\cite{tsuji1968} (Theorem~III.28, p.~77),
we have
\[ \sup_\Omega\phi = \sup_{q\in \partial\Omega}
    \flim_{\substack{z\ra q\\ z\in \Omega}} \phi(z)
= \sup_{q\in\partial\Omega} \frac1k\log\abs{f(q)} = \frac1k\log \sup_{\partial E}\abs f
= \frac1k\log \norm f_E. \]
(Taking limits for the fine topology, we may ignore the eventual
polar subset of~$\partial E$ at which $G_E$ does not tend to~$0$.)
Moreover, 
\begin{align*} \phi(p) &= \frac1k\lim_{q\ra p} \phi(q)=\frac1k\lim_{q\ra p} \log \abs{f(q) z(q)^k}
   -   \lim_{q\ra p} \left( G_E(q)-\log\abs{z(q)}^{-1}\right)\\
&= \frac1k\log \norm{j^k(f)}^\CAP  .
\end{align*}
Consequently, $\norm{j^k(f)}^\CAP\leq\norm f_E$.
\end{proof}

For such a function~$f$, let $\nu(f)$ be the measure
$f^*\delta_0/k$ given by the zeroes of~$f$ (divided by~$k$).
It is a positive measure on~$M$ with total mass~$\leq1$, and
a probability measure if and only if $j^k(f)\neq0$.

\begin{theo}\label{theo.1}
Let $(k_n)$ be a sequence of positive integers. For any~$n$, let
$f_n\in\mathscr O(k_n p)$ be a non-zero meromorphic function
on~$M$ with a pole of order at most~$k_n$ at~$p$.
Assume that:
\begin{enumerate}
\item $\displaystyle \limsup_n \frac1{k_n}\log\norm{f_n}_E \leq 0$;
\item for any compact subset~$C$ in~$\mathring E$,  $\nu(f_n)(C)\ra 0$;
\item there exists a non-empty compact subset~$S$ in~$\Omega$ such that
\[ \liminf_n  \sup_S (\frac1{k_n}\log\abs{f_n}-G_E) \geq0.\]
\end{enumerate}
Then, the sequence of measures~$(\nu(f_n))$ converges to the equilibrium
measure~$\mu_E$ in the weak-$*$ topology.
\end{theo}

\begin{remas}\label{remas-1.3}
\begin{enumerate}\def\theenumi{\alph{enumi}}\def\labelenumi{\theenumi)}
\item
For any~$n$, the function
 $\phi_n=k_n^{-1}\log\abs{f_n}-G_E$ is subharmonic on~$\Omega$. 
In particular, it is upper-semicontinuous, 
hence bounded from above on any compact subset of~$\Omega$.
The upper-bound on~$S$ in condition~(3) is therefore finite.
More precisely, we have seen that
 $\sup_S \phi_n\leq k_n^{-1}\log\norm{f_n}_E$.
Condition~(3) implies that 
$\liminf_n k_n^{-1}\log\norm{f_n}_E\geq0$.
Condition~(1) implies  $\limsup_n \sup_S\phi_n \leq 0$. 
The conjunction of Conditions~(1) and~(3) is thus equivalent
to the two equalities
\[ \lim_n \frac1{k_n}\log\norm{f_n}_E = \lim_n \sup_S \phi_n = 0. \]
\item
For $S=\{p\}$, Condition~(3) is equivalent to 
\[ \liminf_n \frac1{k_n}\log\norm{j_{k_n}(f_n)}^\CAP\geq 0 \]
but requiring this inequality is more restrictive.
For example, if $f_n\in\mathscr O((k_n-1)p)$, then $j^k(f_n)=0$
but Condition~(3) still can be valid for some compact subset.
When Condition~(3) holds for $S=\{p\}$, 
Condition~(1) implies that 
\[  \lim\frac1{k_n}\log\norm{j_{k_n}(f_n)}^\CAP=0.\]
\end{enumerate}
\end{remas}

\begin{lemm}\label{lemm.minor-T}
Let $(k_n)$ be a sequence of positive integers. For any $n$,
let $f_n\in \mathscr O(k_n p)$. Assume that Conditions~(1) and~(3)
of Theorem~\ref{theo.1} hold. Then, for any compact
and non-polar subset $T\subset \Omega$, one has
\[ \liminf_n \sup_T(\frac1{k_n}\log\abs{f_n}-G_E) = 0.\]
\end{lemm}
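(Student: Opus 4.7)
The plan is to dispatch the upper bound $\limsup_n \sup_T \phi_n \leq 0$ trivially (writing $\phi_n := k_n^{-1}\log\abs{f_n} - G_E$) and to attack the matching lower bound by contradiction via a two-constants argument. For the upper bound, Lemma~\ref{lemm.1} gives $\phi_n \leq c_n := k_n^{-1}\log\norm{f_n}_E$ on all of $\Omega$, and Condition~(1) suffices.

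For the lower bound, I would suppose $\liminf_n \sup_T \phi_n < 0$ and, after extracting a subsequence, fix $\epsilon > 0$ with $\sup_T \phi_n \leq -\epsilon$ for all $n$. The goal is to convert this upper bound on $T$ into an upper bound on $S$ that is eventually strictly negative, thereby contradicting Condition~(3).

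To do so, I introduce the harmonic measure $\omega$ of $T$ in $\Omega\setminus T$: the bounded harmonic function on $\Omega\setminus T$ with fine boundary values~$1$ on $\partial T$ and $0$ on $\partial\Omega = \partial E$. The non-polarity of $T$ guarantees $\omega > 0$ on $\Omega\setminus T$ by the minimum principle. Then $\phi_n - c_n + (c_n+\epsilon)\omega$ is subharmonic on $\Omega\setminus T$ with nonpositive fine boundary values on both $\partial T$ and $\partial\Omega$, so the maximum principle (as invoked in the proof of Lemma~\ref{lemm.1}) yields
\[ \phi_n \leq c_n - (c_n+\epsilon)\,\omega \quad\text{on } \Omega. \]
Setting $\eta := \inf_S \omega > 0$ --- using continuity and strict positivity of $\omega$ on $S$, the possibility $S\cap T \neq \emptyset$ being only easier since $\omega=1$ there --- I deduce $\sup_S \phi_n \leq c_n(1-\eta) - \epsilon\eta$. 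Condition~(1) then forces $\liminf_n \sup_S \phi_n \leq -\epsilon\eta < 0$, which contradicts Condition~(3).

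The main delicate point is the potential-theoretic construction of $\omega$ and the justification of the maximum principle at its fine boundary values; this is classical (compare the reference to~\cite{tsuji1968} used for Lemma~\ref{lemm.1}), and the non-polarity of $T$ is precisely what makes $\omega$ strictly positive and thus produces the usable constant~$\eta$.
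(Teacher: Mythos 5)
Your argument is correct and is essentially the paper's own proof: the harmonic measure $\omega$ you introduce is exactly the paper's comparison function $u$ up to the constant factor $m=\liminf_n\sup_T\phi_n$ (the paper takes $u$ harmonic on $\Omega\setminus T$ with fine boundary values $0$ on $\partial E$ and $m$ on $\partial T$, majorizes $\phi_n$ by $u+\varepsilon$ via the same Tsuji-type maximum principle modulo polar sets, and contradicts Condition~(3) through $\sup_S u<0$, which uses the same non-polarity-plus-connectedness input as your positivity of $\omega$). The only structural difference is the case $S\cap T\neq\emptyset$: the paper reduces it to the disjoint case by inserting an auxiliary non-polar compact $T'$ disjoint from $S\cup T$, whereas you extend $\omega$ by $1$ across $T$ --- which works, provided you take the lower semicontinuous (superharmonic, balayage-type) extension so that $\inf_S\omega>0$ is still guaranteed at possibly irregular points of $\partial T$.
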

\begin{proof}
Set $\phi_n=\frac1{k_n}\log\abs{f_n}-G_E$ and
let $S$ be a non-empty compact subset of~$\Omega$ such that 
$\liminf_n \sup_S\phi_n\geq 0$. 
Let $m=\liminf_n\sup_T\phi_n$.  
By Remark~\ref{remas-1.3}, a), 
it suffices to prove that $m\geq0$.

First assume that $T$ is disjoint from~$S$.
Then there exists a harmonic function~$u$ on $\Omega\setminus T$
which, up to a set of capacity zero, vanishes on the boundary of~$E$
and equals~$m$ at the boundary of~$T$.
Let $\eps>0$; by Remark~\ref{remas-1.3}, a), 
for sufficiently large integers~$n$, 
$\phi_n\leq u+\eps$ on~$\partial E$ (Condition~(1)), as well as
on~$\partial T$ (by the definition of~$m$), modulo subsets of zero capacity.
Since $\phi_n$ is subharmonic on~$\Omega\setminus T$
the maximum principle of~\cite{tsuji1968} (Theorem~III.28, p.~77)
implies  that $\phi_n\leq u+\eps$ on~$\Omega\setminus T$.
Therefore, $\sup_S \phi_n \leq \sup_Su+\eps$
and $\liminf_n \sup_S\phi_n \leq \sup_S u+\eps$. 
Considering arbitrary small positive~$\eps$, we obtain
$\liminf_n \sup_S\phi_n\leq \sup_Su$.
If $m<0$, the strong maximum principle implies that 
$u<0$ on~$\Omega\setminus T$ (since $\Omega$
is connected, the closure of any connected component of~$\Omega\setminus T$ 
meets~$T$), hence $\sup_S u<0$, a contradiction.

In general, let $T'$ be a compact non-polar
subset of~$\Omega$, disjoint from~$S\cup T$; 
for example, a closed disk (of non-empty interior) contained
in the complementary subset.
By the previous case,  the statement holds for~$T'$
(since $T'$ is disjoint from~$S$). Since $T$ is disjoint
from~$T'$, it also holds for~$T$.
\end{proof}

\begin{lemm}
Under the assumptions of Theorem~\ref{theo.1}, the sequence~$(k_n)$
converges to~$+\infty$.
\end{lemm}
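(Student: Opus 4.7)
The plan is to argue by contradiction: assume $(k_n)$ admits a bounded subsequence, along which I further extract so that $k_n = k$ is constant. All $f_n$ then live in the finite-dimensional space $V = \Gamma(M,\mathscr O(kp))$; since~$E$ is non-polar, hence uncountable, whereas a nonzero element of~$V$ has at most~$k$ zeros, the restriction of $\norm{\cdot}_E$ to~$V$ is a genuine norm. The normalized sequence $g_n = f_n/\norm{f_n}_E$ then sits on its unit sphere, so by compactness I extract a subsequence $g_n \to g \in V$ with $\norm g_E = 1$, in particular $g \not\equiv 0$.

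I then combine Remark~\ref{remas-1.3}~(a), which gives $k^{-1}\log\norm{f_n}_E \to 0$, with Lemma~\ref{lemm.minor-T} applied to a small closed disk $T\subset\Omega$ avoiding~$p$: along a further subsequence, $\sup_T (k^{-1}\log\abs{g_n} - G_E) \to 0$. Picking a maximizer $q_n \in T$ and extracting $q_n \to q \in T$, the continuity of $G_E$ on~$T$ and the uniform convergence $g_n \to g$ on~$T$ (all norms on the finite-dimensional~$V$ being equivalent) force $g(q) \neq 0$ together with $\psi(q) = 0$, where $\psi = k^{-1}\log\abs g - G_E$. By Lemma~\ref{lemm.1}, $\psi \leq 0$ on~$\Omega$; being subharmonic on the connected open set~$\Omega$ and attaining its maximum at the interior point~$q$, the strong maximum principle forces $\psi \equiv 0$, \ie $k^{-1}\log\abs g = G_E$ throughout~$\Omega$.

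It remains to contradict the existence of such a~$g$. Since $G_E$ is harmonic and non-negative on $\Omega \setminus \{p\}$ with $G_E \to +\infty$ at~$p$, the strong minimum principle gives $G_E > 0$ strictly on~$\Omega$; hence $\abs g > 1$ on~$\Omega$ and, by continuity, $g$ is zero-free on $\overline\Omega \setminus \{p\}$. The only remaining possibility for a zero of~$g$ is inside~$\mathring E$; but condition~(2), together with the fact that $\nu(g_n) = \nu(f_n)$ takes values in $\tfrac 1k \Z_{\geq 0}$, shows that $g_n$ is eventually zero-free on every compact subset of~$\mathring E$, so Hurwitz's theorem forbids zeros of~$g$ there. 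Thus~$g$ has no zero on $M\setminus\{p\}$; since the divisor of a meromorphic function on~$M$ has degree zero, $g$ has no pole either, hence is holomorphic on~$M$ and so constant of modulus~$1$---contradicting $\abs g > 1$ on the non-empty open set~$\Omega$.

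The step I expect to require the most care is the passage from condition~(3) on~$S$ to an equality at an interior point suitable for the strong maximum principle: the limiting maximizer~$q$ must satisfy both $g(q) \neq 0$ and $G_E(q) < \infty$, which could fail if $q = p$ (where $\abs g$ and $G_E$ both blow up). Lemma~\ref{lemm.minor-T}, by permitting~$S$ to be replaced by any compact non-polar subset of~$\Omega$, makes it legitimate to work on a~$T$ chosen away from~$p$, and so resolves this point cleanly.
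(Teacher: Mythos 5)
Your proof is correct and follows essentially the same route as the paper: reduce to constant $k_n=k$, use non-polarity of~$E$ and finite-dimensionality of $\Gamma(M,\mathscr O(kp))$ to extract a limit function, invoke Lemma~\ref{lemm.minor-T} on a compact set away from~$p$ and Hurwitz on~$\mathring E$, and conclude by the degree-zero-divisor contradiction. The only (harmless) variation is that you fix one disk~$T$, pass to a limiting maximizer, and propagate $\psi\equiv 0$ by the strong maximum principle, whereas the paper lets the compact set~$S$ vary to get the pointwise bound $\abs f\geq e^{kG_E}$ directly.
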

\begin{proof}
Assume otherwise. Choosing a subsequence if necessary, we may
assume that the sequence~$(k_n)$ is constant, equal to a
positive integer~$k$.
Then, $\lim\log\norm{f_n}_E =0$; in particular, the
sequence~$(\norm {f_n}_E)$ is bounded.
Since $E$ is infinite (being non-polar), $\norm{\cdot}_E$ is a norm
on~$\Gamma(M,\mathscr O(k p))$. Since this space is finite-dimensional,
all norms on it are equivalent, and the sequence~$(f_n)$
contains a converging subsequence. Its limit is a function $f\in\mathscr O(kp)$.
The convergence is uniform on any compact subset of~$M\setminus\{p\}$.
By Condition~(2) and Hurwitz's Theorem, $f$ does not vanish on~$\mathring E$.

Let $S$ be a compact and non-polar subset of~$\Omega\setminus\{p\}$; 
By Lemma~\ref{lemm.minor-T}, we have
\[ 0\leq \liminf_n \sup_S\left( \frac 1{k_n}\log\abs{f_n}-G_E\right)
 = \sup_S \left(\frac1k \log\abs f-G_E\right). \]
In particular, $\sup_S \abs f\geq e^{k G_E}\geq 1$.
Since $S$ is arbitrary, we conclude that 
$\abs{f(q)}\geq e^{k G_E(q)}\geq 1$ 
for any $q\in \Omega\setminus\{p\}$. This implies
that $f$ doesn't vanish on~$\Omega\setminus\{p\}$ and that the order
of its pole at~$p$ is equal to~$k$.
Letting the point~$q$ tend to a point of~$\partial E$,
we see that $\abs{f}\geq 1$ on~$\partial E$.

In conclusion, $f$ doesn't vanish on~$M\setminus\{p\}$, which 
contradicts the presence of a pole at~$p$. 
\end{proof}

\begin{lemm}\label{lemm.vadh}
Under the assumptions of Theorem~\ref{theo.1}, any limit measure~$\nu$
of the sequence~$(\nu(f_n))$ is a probability measure supported on~$\partial E$.
\end{lemm}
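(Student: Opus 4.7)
The plan is to study the subharmonic potentials $\phi_n = k_n^{-1}\log\abs{f_n}-G_E$ on~$\Omega$, to show that along a subsequence they converge to zero in $L^1_{\mathrm{loc}}(\Omega)$, and then to read off the structure of the limit~$\nu$ from the distributional identity $\ddc\phi_n = \nu(f_n)|_\Omega + (1-\theta_n)\delta_p$ on~$\Omega$, where $\theta_n = \ord_p(f_n)/k_n\in[0,1]$ is the total mass of~$\nu(f_n)$ and $p\in\Omega$.

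First I would gather the bounds on~$\phi_n$. Combining Condition~(1) with the maximum principle, $\sup_\Omega\phi_n \leq k_n^{-1}\log\norm{f_n}_E$ has limsup $\leq 0$; and Lemma~\ref{lemm.minor-T} yields $\sup_T\phi_n\to 0$ for any compact non-polar $T\subset\Omega$. So $(\phi_n)$ is a sequence of subharmonic functions on~$\Omega$, locally uniformly bounded above by $o(1)$, not degenerating to $-\infty$ on compact non-polar subsets. By the standard $L^1_{\mathrm{loc}}$-compactness of subharmonic functions, one can extract a subsequence with $\phi_n\to\phi_\infty$ in $L^1_{\mathrm{loc}}(\Omega)$, for some subharmonic~$\phi_\infty$ on~$\Omega$. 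The upper bound $\phi_n\leq\eps_n\to 0$ integrates to $\phi_\infty\leq 0$ almost everywhere, hence everywhere by upper semicontinuity of~$\phi_\infty$. On the other hand, combining $\sup_T\phi_n\to 0$ with the sub-mean-value property applied on small disks produces a point of~$\Omega$ at which $\phi_\infty$ attains~$0$, and the strong maximum principle on the connected open set~$\Omega$ forces $\phi_\infty\equiv 0$.

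With $\phi_\infty = 0$ in hand, the continuity of $\ddc$ in the distributional sense gives $\ddc\phi_n \to 0$ vaguely on~$\Omega$. Writing $\ddc\phi_n|_\Omega = \nu(f_n)|_\Omega + (1-\theta_n)\delta_p$ as a sum of two nonnegative measures on~$\Omega$, each summand must converge vaguely to zero; after a further subsequence with $\theta_n\to\theta\in[0,1]$, this yields $\theta = 1$ and $\nu|_\Omega = 0$. Since~$M$ is compact, weak-$*$ convergence preserves total mass, so $\nu(M) = \lim\theta_n = 1$. Finally, Condition~(2) gives $\nu(\mathring E) = 0$: for any open~$U$ with $\bar U\subset\mathring E$, $\nu(U)\leq\liminf_n \nu(f_n)(\bar U) = 0$, and inner regularity yields $\nu(\mathring E)=0$. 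Combined with $\nu(\Omega)=0$, the support of~$\nu$ lies in $\bar E\setminus\mathring E = \partial E$, and $\nu$ is a probability measure supported on~$\partial E$, as claimed.

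The hardest step is the identification $\phi_\infty\equiv 0$: $L^1_{\mathrm{loc}}$-convergence of subharmonic functions does not automatically give pointwise bounds, so one has to invoke the Hartogs-type upper envelope theorem (for $\phi_\infty\leq 0$) and to transfer the ``$\sup_T\phi_n\to 0$'' information through a sub-mean-value inequality (to find a point at which $\phi_\infty$ attains~$0$). The Brelot-style potential theory available in~\cite{tsuji1968} furnishes the required tools.
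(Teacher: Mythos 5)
Your argument is correct, but it takes a genuinely different route from the paper's. The paper never invokes compactness of families of subharmonic functions: it proves $d_n/k_n\to 1$ by comparing $\phi_n$ with the auxiliary subharmonic function $\frac1{k_n}\log\abs{f_n}-\frac{d_n}{k_n}G_E$ and using $\inf_S G_E>0$, and it proves $\nu(f_n)(C)\to 0$ for a compact $C\subset\Omega$ by adding to $\phi_n$ the sum $\frac1{k_n}\sum_j G_{E,c_{n,j}}$ of Green functions with poles at the zeroes lying in~$C$ — in both cases only the maximum principle and the strict positivity of Green functions on compacta are used. You instead package both facts into the single statement $\ddc\phi_n\to 0$ on~$\Omega$, deduced from $L^1_{\mathrm{loc}}$-compactness of subharmonic functions and the identification $\phi_\infty\equiv 0$; on a Riemann surface this is a clean and arguably more conceptual argument, and it delivers $\nu(f_n)(C)\to0$ directly rather than the paper's $\liminf_n\nu(f_n)(C)=0$ followed by passage to subsequences. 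What the paper's barehanded version buys is portability: the stated purpose of Section~1 is that the proof should transfer verbatim to Berkovich curves (Theorem~\ref{theo.2}), where there is no ambient Lebesgue measure and the $L^1_{\mathrm{loc}}$-compactness/Hartogs package is not part of the standard toolkit of Thuillier or Baker--Rumely, whereas maximum principles, Green functions and energy are. Your proof would have to be reworked for Section~2.

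Two points to tighten. First, in the step where near-maximizers produce a point with $\phi_\infty=0$: if $x_n\in T$ satisfies $\phi_n(x_n)\geq-\delta_n$ and $x_n\to x$, passing from the sub-mean-value inequality over $B(x_n,r)$ to the average of $\phi_\infty$ over $B(x,r)$ requires controlling the integral of $\phi_n$ over the symmetric difference $B(x,r)\mathbin{\triangle}B(x_n,r)$, and $\phi_n$ is not locally bounded below; this is rescued by $L^1_{\mathrm{loc}}$-convergence together with absolute continuity of the integral of $\phi_\infty$, or more simply by quoting Hartogs' lemma in the form $\limsup_n\sup_K\phi_n\leq\sup_K\phi_\infty$. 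Note that this is the direction that genuinely needs Hartogs; the inequality $\phi_\infty\leq 0$ follows from merely integrating the bound $\phi_n\leq\eps_n$ and applying the sub-mean-value property of~$\phi_\infty$, so your attributions of the two tools are swapped. Second, state explicitly that the $L^1_{\mathrm{loc}}$-convergent subsequence is extracted from within the subsequence along which $\nu(f_n)\to\nu$, so that the conclusions $\theta_n\to1$ and $\nu|_\Omega=0$ indeed apply to the given limit measure~$\nu$.
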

\begin{proof}
First of all, $\nu$ is a probability measure.
Indeed,
let $d_n$ be the order of the pole of~$f_n$ at~$p$; it is the smallest
integer~$d$ such that $f\in\mathscr O(dp)$ and the mass of~$\nu(f_n)$
equals $d_n/k_n$. Set
\[ \phi'_n=\frac1{k_n}\log\abs{f_n}-\frac{d_n}{k_n}G_E. \]
This is a subharmonic function on~$\Omega$, bounded from above by  $\frac1{k_n}\log\norm{f_n}_E$.
Moreover, since
\[ \phi_n = \frac1{k_n}\log\abs{f_n}-G_E
=\phi'_n - \frac{k_n-d_n}{k_n} G_E, \]
one has
\[ \sup_S \phi_n \leq \sup_S \phi'_n - \frac{k_n-d_n}{k_n} \inf_S G_E
\leq \frac1{k_n}\log\norm{f_n}_E  - \frac{k_n-d_n}{k_n} \inf_S G_E ,\]
where $S$ is any compact and non-polar subset of~$\Omega$, disjoint from~$p$,
so that \mbox{$\inf_S G_E>0$.}
When $n\ra\infty$, it follows that
\[ 0 \leq\liminf_n\sup_S \phi_n \leq - \limsup_n \frac{k_n-d_n}{k_n}\inf_S G_E.\]
Consequently, $\limsup_n\frac{k_n-d_n}{k_n}\leq 0$, hence
\[ \lim_n \frac{d_n}{k_n}=1. \]

We now show that the support of~$\nu$ is contained in~$\partial E$.
By Condition~(2), it is disjoint from~$\mathring E$.
Thus it suffices to prove that it is contained in~$E$.

Let $C$ be a compact subset in~$\Omega$. We claim that $\nu(f_n)(C)\ra0$.
Indeed, for $c\in C$, 
let $G_{E,c}$ be the Green function for~$E$ with pole at~$c$.
For any integer~$n$, let $(c_{n,j})_{j\in J_n}$ be the family of those zeroes of~$f_n$ which belong to~$C$, repeated according to their multiplicity.
Set
\[ \phi'_n = \phi_n + \frac1{k_n}\sum_{j\in J_n} G_{E,c_{n,j}};\]
it is a subharmonic function on~$\Omega$, bounded from above 
by~$\frac1{k_n}\log\norm{f_n}_E$.
Let $S$ be a compact and non-polar subset of~$\Omega$, disjoint from~$C$.
Since the function $(q,c)\mapsto G_{E,c}(q)$ is continuous and positive
on $S\times C$,
it follows that $\inf_S \inf_{c\in C} G_{E,c}>0$.
Then,
\[ \sup_S \phi'_n  \geq \sup_S \phi_n + \inf_S \frac1{k_n}\sum_{j\in J_n} G_{E,c_{n,j}} \geq \sup_S \phi_n + \nu(f_n)(C) \inf_{S}\inf_{c\in C} G_{E,c},\]
and 
\[ \liminf_n \sup_S \phi'_n \geq \liminf_n \nu(f_n)(C)\inf_{c\in C} G_{E,c}.\]
On the other hand,
the inequality $\phi'_n\leq \frac1{k_n}\log\norm{f_n}_E$
implies that
\[ \liminf_n \sup_S \phi'_n \leq 0.\]
It follows that $\liminf_n \nu(f_n)(C)=0$, as claimed.
Passing to subsequences, this implies that $\nu(\mathring C)=0$.
Since $\Omega$ is locally compact, we conclude that
the support of~$\nu$ is disjoint from~$\Omega$, so is contained in~$E$.
This concludes the proof.
\end{proof}

\begin{proof}[Proof of Theorem~\ref{theo.1}]
Since $M$ is compact, the space of probability measures on~$M$
is also compact.
It suffices to prove that $\mu_E$ is the only possible limit value 
of the sequence~$(\nu(f_n))$.
Let $\nu$ be such a limit value.
By Lemma~\ref{lemm.vadh}, $\nu$ is a probability measure supported on~$\partial E$. 
Replacing~$k_n$ by the order of the pole of~$f_n$ at~$p$,
we suppose that for any~$n$, 
$f_n\not\in \mathscr O((k_n-1)p)$, in other words, $j^k(f_n)\neq0$.

Since~$\nu$ admits a countable basis of neighborhoods, and 
after passing to a subsequence,
we may assume that $\nu(f_n)$ converges to~$\nu$.

Let $g(\cdot,\cdot)_p=$  be a Green kernel on~$M\times M$
relative to the point~$p$; this is a distribution on~$M\times M$
satisfying the following properties:
\begin{itemize}
\item  the partial differential relation
 $\ddc g+\delta_{\Delta}=\delta_{p\times M}+\delta_{M\times p}$
holds;
\item the distribution~$g$ is symmetric.
\end{itemize}
If $M=\mathbf P^1$, $p=\infty$ and $M\setminus\{p\}$
is identified with~$\C$, 
one can take $g(z_1,z_2)_p=\log\abs{z_1-z_2}^{-1}$.
We refer to~\cite{rumely1989} for a proof of 
the existence of such a distribution in general.
(In the notation of that book, 
this is the distribution $-\log[\cdot,\cdot]_p$.)
Moreover, for any points~$m$ and~$m'$ in~$M\setminus\{p\}$, one has
\begin{equation}
 \label{eq.vanish}
\lim_{z\ra p} \left( g(z,m)_p-g(z,m')_p \right) = 0 , \end{equation}
uniformly when~$m$ and~$m'$ belong to a fixed compact subset of~$M\setminus\{p\}$. The Green kernel thus defines a local parameter at~$p$, well-defined
up to multiplication by a local holomorphic function 
of absolute value equal to~$1$ at~$p$.

For any measure~$\alpha$ on~$M$ whose support does not contain~$p$,
let \[ U^\alpha = \int g(z,\cdot)_p \mathrm d\alpha(z) \]
be the potential of~$\alpha$ with respect to the kernel~$g$;
it is a distribution on~$M$ such that 
$\ddc U^\alpha+\alpha=\norm\alpha \delta_p$, where
$\norm\alpha=\langle \alpha,1\rangle$ is the total mass of~$\alpha$.
In particular, $U^\alpha$ is subharmonic outside of the support of~$\alpha$.
If $\alpha$ is positive, $U^\alpha$ is subharmonic outside of~$p$.
If the total mass of~$\alpha$ is zero then 
Equation~\eqref{eq.vanish} implies that
$U^\alpha$ is continuous and vanishes at~$p$.
The Green function~$G_E$ with pole at~$p$ can be written in terms
of the potential~$U^{\mu_E}$ of the equilibrium measure~$\mu_E$:
one has $U^{\mu_E}=-G_E-\log\CAP (E)$.

For any meromorphic function $f\in\mathscr O(kp)$
such that $j^k(f)\neq 0$, $\log\abs f+k U^{\nu(f)}$
is a harmonic function on the compact space~$M$, hence constant.
Let $a$ be a complex number such that this function equals $\log\abs a$.
Then,
\[ \phi=\frac1k\log\abs f-G_E=-U^{\nu(f)}-G_E+\frac1k\log\abs a
= -U^{\nu(f)}+U^{\mu_E}+\frac1k \log\norm{j^k(f)}^\CAP \]
since $U^{\nu(f)}-U^{\mu_E}$ vanishes at~$p$.

To prove Theorem~\ref{theo.1}, we
establish the inequality
$U^\nu\leq U^{\mu_E}$. Then a standard argument shows that
$\mu_E=\nu$.

\medskip

Let $V$ a compact non-polar subset of~$\Omega$ and $W$
a compact neighborhood of~$V$ contained in~$\Omega$.
Assume that $p\not\in V$ but $p\in W$.
The measure $\nu(f_n)$ splits canonically as the sum 
 $\nu_W(f_n)+\nu^W(f_n)$
of two measures, where $\nu_W(f_n)=\nu(f_n)\mathbf 1_W$
is supported on~$W$, while $W$ has measure~$0$ with
respect to $\nu^W(f_n)=\nu(f_n)(\mathbf 1-\mathbf 1_W)$.

According to Lemma~\ref{lemm.vadh},
$\norm{\nu_W(f_n)}$ tends to~$0$ when~$n$ goes to~$+\infty$,
so that $\nu$ is also a limit value of the sequence~$(\nu^W(f_n))$.
Since $g(\cdot,\cdot)_p$ is bounded on 
$\complement W\times V$,
$U^\nu$ is a limit value of the sequence~$(U^{\nu^W(f_n)})$, 
for the topology of uniform convergence on~$V$.
(For any subset~$A$ of~$M$, we write $\complement M$
to denote the complementary subset $M\setminus A$ to~$A$ in~$M$.)


Following Andrievskii and Blatt~\cite{andrievskii-blatt2002}, let
us decompose
\begin{align*}\phi_n & =\frac1{k_n}\log\abs{f_n}-G_E \\ 
&= -U^{\nu(f_n)}+U^{\mu_E} +\frac1{k_n}\log\norm{j^k(f_n)}^\CAP  \\
& = \left( -U^{\nu^W(f_n)}+\norm{\nu^W(f_n)} U^{\mu_E}\right) 
 + \norm{\nu_W(f_n)} U^{\mu_E} 
  \\ & \qquad\qquad {}
   + \left( \frac1{k_n}\log\norm{j^k(f_n)}^\CAP - U^{\nu_W(f_n)}\right).
\end{align*}
The function $-U^{\nu_W(f_n)}$ is subharmonic on~$M\setminus\{p\}$.
Let $R$ be any compact neighborhood of~$p$, 
disjoint from~$V$ and contained in~$W$.
Since $V\subset\complement R\subset M\setminus\{p\}$,
the maximum principle implies that
\[ \sup_V (-U^{\nu_W(f_n)}) \leq \sup_{\complement R} (-U^{\nu_W(f_n)})
 =\sup_{\partial R}(-U^{\nu_W(f_n)}) .
\]
Consequently,
\begin{multline*} \sup_V  \left( \frac1{k_n}\log\norm{j^k(f_n)}^\CAP -U^{\nu_W(f_n)} \right) \\
\leq \sup_{\partial R} \phi_n 
 - \inf_{\partial R} \left( -U^{\nu^W(f_n)}+\norm{\nu^W(f_n)} U^{\mu_E}\right)
 - \norm{\nu_W(f_n)}\inf_{\partial R}  U^{\mu_E} .\end{multline*}

The function $-U^{\nu^W(f_n)}+\norm{\nu^W(f_n)} U^{\mu_E}$
is continuous on~$R$, and harmonic on~$\mathring R$. 
For $n$ going to infinity,  it converges uniformly
to~$-U^{\nu}+U^{\mu_E}$, which is again continuous on~$R$,
harmonic on~$\mathring R$ and vanishing at~$p$.
It follows that
\[ \lim_n \inf_{\partial R} \left( -U^{\nu^W(f_n)}+\norm{\nu^W(f_n)} U^{\mu_E}\right) = \inf_{\partial R} \left(-U^{\nu}+U^{\mu_E}\right) .\]
By Lemma~\ref{lemm.1} and Condition~(1) of Theorem~\ref{theo.1},
$\phi_n\leq \frac 1{k_n}\log\norm{f_n}_E$
and \[ \limsup_n \sup_{\partial R} \phi_n \leq 0. \]
Since $\norm{\nu_W(f_n)}$ tends to~$0$,
\[ \liminf_n \norm{\nu_W(f_n)}\inf_{\partial R}  U^{\mu_E} =0. \]
Finally,
\[ \limsup_n \sup_V  \left( \frac1{k_n}\log\norm{j^k(f_n)}^\CAP  -U^{\nu_W(f_n)} \right)
\leq \sup_{\partial R} \left( U^\nu-U^{\mu_E}\right). \]
Choose  the compact neighborhood~$R$ arbitrarily close to~$\{p\}$.
Since $U^\nu-U^{\mu_E}$ is continuous and vanishes at~$p$
we deduce
\begin{equation}\label{eq.jkf-u}
 \limsup_n \sup_V  \left( \frac1{k_n}\log\norm{j^k(f_n)}^\CAP - U^{\nu_W(f_n)}\right) \leq 0 .\end{equation}

Furthermore,
\begin{align}
\notag
 \inf_{V} U^{\nu^W(f_n)} &= \inf_V \left(U^{\nu(f_n)}-U^{\nu_W(f_n)}\right)\\
\notag
&=\inf_V \left( \left( U^{\nu(f_n)}-U^{\mu_E} \right)
 - \left(U^{\nu_W(f_n)}-U^{\mu_E}\right)\right)\\
\notag \label{1.7}
& \leq \inf_V 
 \left( U^{\nu(f_n)}-U^{\mu_E} \right) - \inf_V U^{\nu_W(f_n)}+ \sup_V U^{\mu_E}.
\end{align}
Since $U^{\nu^W(f_n)}$ converges uniformly to~$U^\nu$
on~$V$, Equation~\eqref{eq.jkf-u} implies that
\[ \inf_V U^\nu \leq \limsup_n \inf_V \left(
  U^{\nu(f_n)}-U^{\mu_E} -\frac1{k_n}\log\norm{j^{k_n}(f_n)}^\CAP \right)
               + \sup_V U^{\mu_E}. \]
However
\[  U^{\nu(f_n)}-U^{\mu_E} -\frac1{k_n}\log\norm{j^{k_n}(f_n)}^\CAP
= -\frac1{k_n}\log\abs{f_n} + G_E, \]
so that
\[ \limsup_n \inf_V \left(
  U^{\nu(f_n)}-U^{\mu_E} -\frac1{k_n}\log\norm{j^{k_n}(f_n)}^\CAP \right)
\leq - \liminf_n \sup_V \left( \frac1{k_n}\log\abs{f_n}-G_E\right)
\leq 0 \]
by Lemma~\ref{lemm.minor-T}.

This proves  the inequality
\[ \inf_V U^\nu \leq \sup_V U^{\mu_E}. \]
Since $U^\nu$ and $U^{\mu_E}$ are continuous on~$\complement (E\cup\{p\})$,
this implies that $U^\nu\leq U^{\mu_E}$ on~$\complement(E\cup\{p\})$.

It follows that $\nu=\mu_E$. Indeed, since $U^\nu$ is subharmonic
outside~$E$ and $U^{\mu_E}$ is bounded from above by~$-\log\CAP (E)=I(\mu_E)$
on~$\partial E$, up to a polar subset, we have
$U^\nu\leq I(\mu_E)$ on~$\partial E$. Then,
the energy~$I(\nu)=\int U^\nu\,\mathrm d\nu$ of~$\nu$
is bounded from above by~$I(\mu_E)$. Since~$\mu_E$ is the unique measure 
of minimal energy supported on~$E$,
we obtain that $\nu=\mu_E$, as claimed.
\end{proof}

\section{Analytic curves over ultrametric fields}

Let $K$ be a complete ultrametric valued field (of any characteristic).
Let $M$ a smooth projective, geometrically connected curve over~$K$;
let $p\in M(K)$ and let $z$ be a local parameter at~$p$.

We view~$M$ as a $K$-analytic curve in the sense 
of Berkovich~\cite{berkovich1990}. 
Recall
that $M\setminus\{p\}$ is the Berkovich spectrum $\mathscr M(A)$
of the $K$-algebra $A=\Gamma(M\setminus\{p\},\mathscr O_M)$,
\ie, the set of multiplicative seminorms on this $K$-algebra
which extend the absolute value of~$K$, endowed
with the coarsest topology for which all maps
$a\mapsto (x\mapsto \abs{a}(x))$ are continuous.
We use the standard notation in this subject: if $x\in\mathscr M(A)$
and $a\in A$, $\abs a(x)$ is the value at~$a$ of the semi-norm~$x$.
Every $K$-rational point of~$M$ defines a canonical element of~$M$;
if $q\neq p$, this is just the semi-norm $a\mapsto \abs{a(q)}$ on~$A$.
By~\cite{berkovich1990},
the space~$M$ is connected, locally contractible and compact.
If $K$ admits a countable dense subset, the space~$M$ is also metrizable.

By the works of Favre/Jonsson~\cite{favre-jonsson2004},
Favre/Rivera-Letelier~\cite{favre-rl2006},
Thuillier~\cite{thuillier2005}, 
Baker/Rumely~\cite{baker-rumely2010}, it is well-known
that such a space admits a potential theory formally analogous
to that on compact Riemann surfaces. Therefore, all statements
of the first Section, and their proofs, translate
directly to the ultrametric setting.

When $M$ is the projective line, the required theory is 
the subject of the book~\cite{baker-rumely2010}
by Baker and Rumely.
In his unpublished PhD Thesis, Thuillier~\cite{thuillier2005} 
developed a more general theory, valid for arbitrary curves.
Here, we recall briefly the main aspects of his theory.

The Berkovich space~$M$ carries two sheaves,
the sheaf~$\mathscr A$ of smooth functions,
and its subsheaf~$\mathscr H$ of harmonic functions;
both are subsheaves of the sheaf of real valued continuous functions
on~$M$. 
There is a notion of subharmonic functions;
these obey a maximum principle. If $U$ is an open subset of~$M$
and $f\in\mathscr O(U)$ is an analytic function on~$U$,
the function $\log\abs f$ is subharmonic, and is harmonic
if $f$ doesn't vanish. Harmonic functions satisfy Harnack's principle;
in particular, a uniform limit of harmonic functions on an open
set is harmonic.

Furthermore, one defines 
sheaves of smooth forms, distributions~$\mathscr D^0$,
and currents~$\mathscr D^1$,
as well as a Laplace operator
$\ddc\colon\mathscr D^0\ra\mathscr D^1$. 
Smooth forms are locally finite linear combinations of Dirac measures
at points of type~II or~III,
distributions are dual to smooth forms, currents are
dual to smooth functions; 
there are canonical inclusions of smooth functions
into distributions, and of smooth forms into currents.

If $u$ is smooth, $\ddc u$ is a smooth form.
A function~$u$ on an open subset~$U$ of~$M$ is subharmonic
if and only if $\ddc u$ is a positive measure on~$U$.
Furthermore, a Radon measure~$\mu$ on~$M$ is of the form $\ddc T$,
for some distribution~$T$, if and only if $\langle\mu,1\rangle=0$.

The Dirichlet problem on an open subset of~$M$ is solved using
Perron's method. Barriers exist at any point
of~$M$ which is not of type~I. This implies existence
and uniqueness of a Green function~$G_E$ for a compact subset~$E$, 
with a pole at a prescribed $K$-point~$p\not\in E$.

In classical potential theory, or Abstract potential theory
(see, \eg, \cite{ransford95} and \cite{brelot1967})
 the \emph{kernel} plays an important rôle. It is
an upper-continuous function on the product space
$(M\setminus p)\times (M\setminus p)$.
For $M=\mathrm P^1$ and $p=\infty$, 
this is the so-called \emph{Hsia kernel} $\delta(\cdot,\cdot)_\infty$,
or rather $-\log\delta(\cdot,\cdot)_\infty$. Baker and Rumely
describe it in detail in~\cite[Chapter~4]{baker-rumely2010}.
In general, Thuillier sketches a construction of this kernel 
in Chapter~5 of~\cite{thuillier2005}:
for $m,m'\in M\setminus p$, $g(m,m')=g_m(m')$, where $g_m$
is the unique continuous function on~$M$, with values
in~$\R\cup\{\pm\infty\}$, solution of the equation
$\ddc g_m=\delta_m-\delta_p$
which admits an expansion 
\[ g_m(q)=\log\abs{z(q)}+\mathrm o(1) \]
in a neighborhood of~$p$.
This  function~$g$ is symmetric, continuous with respect to each variable,
lower semi-continuous, and even continuous outside the diagonal.
Moreover, this kernel~$g$ is the largest semi-continuous extension
of the kernel $-\log[\cdot,\cdot]_p$ constructed by Rumely 
in his book~\cite{rumely1989}.

If $\mu$ is a measure with compact support in~$M\setminus p$,
its potential~$U^\mu$ is the unique solution
of the distribution equation
\[ \ddc U^\mu=\mu-\langle\mu,1\rangle\delta_p \]
satisfying 
\[ U^\mu(q) = \langle\mu,1\rangle \log \abs{z(q)}+\mathrm o(1) \]
in a neighborhood of~$p$.
It can be computed using the kernel, by the formula
\[ U^\mu (m)=\int_{M\setminus p}  g(m,m')\,\mathrm d\mu(m'). \]

For $M=\mathrm P^1$, the maximum and continuity principles,
analogs to theorems of Maria and Frostman,
are proved by Baker and Rumely
(\cite{baker-rumely2010}, Theorems~6.15 and~6.18).
In general,
one can refer to Abstract potential theory.
By~\cite{brelot1967}, the maximum and continuity principles
are satisfied as soon as subharmonic functions
satisfy the maximum principle, which is the case.
(Note that Brelot's axiomatic in~\cite{brelot1967} 
only considers positive kernels. 
However, since we will only look at measures whose support is compact
in~$M\setminus p$, the required assertions remain true,
with essentially the same proofs.)

The energy of a measure~$\mu$ 
with  compact support in~$M\setminus p$
is given by the formula
\[ I(\mu)=\int_{M\setminus p} U^\mu(m)\,\mathrm d\mu(m) = \int_{(M\setminus p)^2}
g(m,m')\,\mathrm d\mu(m)\,\mathrm d\mu(m'). \]
Robin's constant~$V_p(E)$ of a compact subset~$E$ of~$M\setminus p$
is the lower bound of the energies of probability measures
supported on~$E$.
If $E$ is not polar, that is, if $V_p(E)\neq+\infty$,
there exists a unique probability measure~$\mu_E$ supported
on~$E$ such that $I(\mu_E)=V_p(E)$: this is the \emph{equilibrium measure}
of~$E$. The existence of equilibrium measures is a consequence
of compactness of the space of probability measures on~$E$. 
For $M=\mathrm P^1$, uniqueness
is shown in Prop.~7.21 of~\cite{baker-rumely2010},
relying on a \emph{strong maximum principle} (Prop.~7.17). 
Theorem~3.6.11 in Thuillier's~\cite{thuillier2005}
furnishes the {\og Evans functions\fg} used by
Baker and Rumely in their proof, so that existence and uniqueness of
an equilibrium measure holds in general.

In Section~1,
we had to extract converging subsequences of sequences of probability measures.
This is still possible when the field~$K$ admits a countable dense
subset since, in that case, the space~$M$ and the space of probability measures on~$M$ are compact and metrizable.
In the general case, subsequences may not suffice
but it suffices to carry out the arguments using
ultrafilters instead of subsequences. Alternatively,
one can also replace sequences by nets, 
as Baker and Rumely do in~\cite{baker-rumely2010}.

It is now clear that the arguments given in Section~1 to prove
Theorem~\ref{theo.1} translate in the present setting of analytic
curves over ultrametric fields and furnish the following theorem.

\begin{theo}\label{theo.2}
Let $K$ be a complete valued ultrametric field,
$M$ a projective smooth and geometrically connected curve over~$K$, 
viewed as a $K$-analytic curve in the sense of Berkovich.
Let $p$ be a $K$-rational  point of~$M$, $z$ 
a local parameter at~$p$. Let $E$ be a compact non-polar subset
of~$M\setminus\{p\}$ such that $\Omega=M\setminus E$ is connected.

For any non-zero rational function~$f$ on~$M$, 
let $\nu(f)$ be the probability measure on~$M$ given by
\[ \nu(f) = \frac1{\deg(f)} \sum_{f(q)=0} \ord_q(f) \delta_q, \]
where the sum is over the zeroes of~$f$;
we also set $\norm{f}_E=\sup_{\partial E}\abs f$.

Let $(k_n)$ be a sequence of positive integers. For any~$n$, let
$f_n\in\mathscr O(k_n p)$ be a \emph{non-zero} meromorphic function
on~$M$ having a pole of order at most~$k_n$ at~$p$.
Let us make the following assumptions:
\begin{enumerate}
\item $\displaystyle \limsup_n \frac1{k_n}\log\norm{f_n}_E \leq 0$;
\item for any compact subset~$C$ in~$\mathring E$,  $\nu(f_n)(C)\ra 0$;
\item there exists a non-empty compact subset~$S$ in~$\Omega$ such that
\[ \liminf_n  \sup_S (\frac1{k_n}\log\abs{f_n}-G_E) \geq0.\]
\end{enumerate}
Then, the sequence of measures~$(\nu(f_n))$ converges to the equilibrium
measure~$\mu_E$ for the weak-$*$ topology.
\end{theo}

\section{Applications to irreducibility}

Let $K$ be a complete ultrametric valued field (of any characteristic).
Let $M=\mathrm P^1$ be the projective line over~$K$ in the
sense of Berkovich. Let $p\in M$ be its point at infinity.
The space $M\setminus p$ is the analytic affine line $\mathrm A^1$,
that is, the Berkovich spectrum~$\mathrm M(K[T])$
of the polynomial algebra~$K[T]$.
Let us fix $T^{-1}$ as a local parameter at~$p$.

Let $R$ be a positive real number. The closed disk, denoted $E(0,R)$
by Berkovich, is the set of points~$x$
in~$\mathrm A^1$ such that $\abs{T}(x)\leq R$. 
It is a compact subset in~$\mathrm A^1$ whose Shilov boundary
has a unique point~$\xi(R)$;
in other words, any holomorphic function on~$E(0,R)$ reaches its maximum at~$\xi(R)$; this point is the Gauß seminorm 
\[ P = \sum_{j=0}^\infty a_j T^j \mapsto \max_j \abs{a_j}R^j; \]
the multiplicativity of this seminorm is Gauß's theorem.
We also write $\norm\cdot_R$ for the supremum norm of a polynomial
or an analytic function on the disk~$E(0,R)$.
The interior of~$E(0,R)$ in the affine line~$\mathrm A^1$
is equal to~$E(0,R)\setminus\{\xi(R)\}$ (\cite{berkovich1990}, Corollary~2.5.13).
The open disk $D(0,R)$ is the set of points~$x$ such that $\abs{T}(x)<R$.

The Green function for~$E(0,R)$ (with pole at infinity)
is given by $x\mapsto \log \max(\abs{T}(x)/R,1)$;
its equilibrium measure is the Dirac measure at~$\xi(R)$.
As a particular case of Theorem~\ref{theo.2}, we obtain:
\begin{prop}\label{prop.3}
Let us consider a sequence of polynomials~$(f_n)$ satisfying
the following properties:
\begin{enumerate}
\item the degree~$k_n$ of~$f_n$ tends to~$+\infty$;
\item the sequence~$(f_n)$ converges uniformly on the disk~$E(0,R)$
to a non zero function.
\item the sequence $(a_n)$ given by the leading coefficient~$a_n$ 
of~$f_n$ satisfies $\lim \abs{a_n}^{1/k_n}\ra 1/R$.
\end{enumerate}
Then, the sequence $(\nu(f_n))$ of probability measures
converges to the Dirac measure at the point~$\xi(R)$.
\end{prop}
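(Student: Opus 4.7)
The plan is to verify the three hypotheses of Theorem~\ref{theo.2} in the special case $M=\mathrm P^1$, $p=\infty$ (with local parameter $z=T^{-1}$), and $E=E(0,R)$; given the description of the Green function and of $\mu_E=\delta_{\xi(R)}$ recalled just before the proposition, Theorem~\ref{theo.2} will then deliver the conclusion at once.

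Two preliminary computations fix the normalisations. Comparing the expansion
\[ G_E(q)=\log\abs{z(q)}^{-1}-\log R+\mathrm o(1) \]
at~$p$ with the defining expansion of~$G_E$ identifies $\CAP(E)=R$. For a polynomial $f_n$ of degree~$k_n$ with leading coefficient~$a_n$, the relation $f_n(q)z(q)^{k_n}\to a_n$ at~$p$ yields $j^{k_n}(f_n)=a_n(\partial/\partial z)^{\otimes k_n}$, hence $\norm{j^{k_n}(f_n)}^\CAP=\abs{a_n}R^{k_n}$. Hypothesis~(2) of the proposition then bounds $\norm{f_n}_{E(0,R)}$, so condition~(1) of Theorem~\ref{theo.2} follows from $k_n\to\infty$; and by hypothesis~(3),
\[ \frac1{k_n}\log\norm{j^{k_n}(f_n)}^\CAP=\frac1{k_n}\log\abs{a_n}+\log R\longrightarrow 0,\]
so condition~(3) of Theorem~\ref{theo.2} holds at $S=\{p\}$ by Remark~\ref{remas-1.3}\,(b).

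The one nontrivial step will be condition~(2). Since $\mathring E=E(0,R)\setminus\{\xi(R)\}$, any compact $C\subset\mathring E$ avoids~$\xi(R)$ and is therefore covered by finitely many Berkovich closed disks $E(a_i,r_i)$ with $r_i<R$. On each such strict sub-disk, the uniform limit~$f$ of the~$f_n$ is a nonzero element of the Tate algebra $K\{r_i^{-1}(T-a_i)\}$ and, by Weierstrass preparation, has only finitely many zeros there. A standard non-archimedean Hurwitz-type argument (stability of Newton polygons under uniform convergence on a closed disk, after shrinking each $r_i$ slightly if necessary to avoid the Shilov boundary of~$E(a_i,r_i)$) then bounds the number of zeros of~$f_n$ in $E(a_i,r_i)$ independently of~$n$. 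Dividing by $k_n\to\infty$ gives $\nu(f_n)(C)\to 0$. This disk bookkeeping is the main technical point; everything else is a direct specialisation of Theorem~\ref{theo.2}.
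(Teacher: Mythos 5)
Your proof is correct and follows essentially the same route as the paper: both reduce to Theorem~\ref{theo.2}, with conditions~(1) and~(3) being the same direct computations (via $\CAP(E(0,R))=R$ and $\norm{j^{k_n}(f_n)}^\CAP=\abs{a_n}R^{k_n}$) and condition~(2) resting on the stability of Newton polygons under uniform convergence. The only cosmetic difference is that the paper applies the Newton-polygon argument once to the whole closed disk $E(0,R)$ (after scaling $T\mapsto aT$ with $\abs a^{-1}=R$ over a suitable extension), obtaining the slightly stronger statement $\nu(f_n)(E(0,R))\to0$, whereas you cover a compact $C\subset\mathring E$ by finitely many strict sub-disks and argue on each.
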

\begin{proof}
Let $f$ be the limit of~$f_n$; it is an analytic function on
the disk~$E(0,R)$, hence an element of the Tate algebra $K\{R^{-1}T\}$.
Condition~(1) of Theorem~\ref{theo.2} is obviously verified.

Since $f\neq 0$ and $E(0,R)$ is compact and connected,
the function~$f$ has only finitely many zeroes on~$E(0,R)$, counted
with multiplicities. Analogously to Hurwitz's theorem
in complex analysis, Condition~(2) of Theorem~\ref{theo.2}
also holds. Indeed, let us even show that $\nu(f_n)(E(0,R))\ra 0$.
Up to replacing~$K$ by a complete
algebraically closed extension,
we may assume that $R=\abs a^{-1}$ for some $a\in K^*$.
Then, the theory
of Newton polygons implies that $k_n\nu(f_n)(C)$
is the degree of the reduction of the polynomial
$\widetilde{f_n(aT)}$. Clearly this degree converges
to that of the polyonomial $\widetilde{f(aT)}$, so we obtain
that $\nu(f_n)(E(0,R))\ra 0$.

Finally, Condition~(3) also holds, with $S=\{\infty\}$.
This implies that $\nu(f_n)$ converges to the Dirac measure at~$\xi(R)$,
as claimed.
\end{proof}

\begin{rema}\label{rema.defect}
In the complex setting, it would be sufficient to assume that
the sequence~$(f_n)$ converges uniformly on compact
subsets of the open disk of radius~$R$,
while in the $p$-adic case, we have to assume that 
the uniform convergence holds on the full closed disk.
This discrepancy is due to the fact that 
the interior of the $p$-adic unit~$E=E(0,R)$
disk is much larger than the open $p$-adic unit disk~$D(0,R)$.
In fact, $\mathring E$ is the complement to the Gauss point~$\xi(R)$
in~$E$. This makes  the equidistribution statement 
of Theorem~\ref{theo.2} almost pointless in this particular case.
Indeed, the easiest part of its proof shows that any limit measure
is supported by~$E$. And since its assumption~(2) 
requires that any limit measure does not charge~$\mathring E$,
this forces the limit measure to be a Dirac mass at the Gauss
point~$\xi(R)$. This is however unavoidable, \emph{cf.} Example~\ref{contrex}.
\end{rema}

The following corollaries are especially interesting under
the supplementary assumption that the coefficients of
the polynomials~$f_n$
belong to a locally compact subfield~$K_0$ of~$K$.
They can be proved directly for elements of a Tate algebra $K\{R^{-1}T\}$
but we keep to our initial goal
and view them as a consequence of the behavior
of the limit measures of zeroes established in Proposition~\ref{prop.3}:
they apply for any sequence~$(f_n)$ for which the sequence~$(\nu(f_n))$
converges to the Dirac measure at a Gau\ss\ point~$\xi(R)$.

\begin{coro}
Let $K_0$ be a locally  compact subfield of~$K$.
When $n\ra\infty$, 
the number of $K_0$-roots of the polynomial~$f_{n}$ 
is $\mathrm o(k_n)$.
\end{coro}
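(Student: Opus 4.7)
The plan is to view $K_0$ as a subset of the analytic affine line~$\mathrm A^1$ via the embeddings $K_0\hookrightarrow K\hookrightarrow\mathrm A^1$, and to exploit the weak-$*$ convergence $\nu(f_n)\ra\delta_{\xi(R)}$ furnished by Proposition~\ref{prop.3}. Since the number of $K_0$-rational roots of~$f_n$, counted with multiplicity, equals $k_n\nu(f_n)(K_0)$, it would suffice to prove that $\nu(f_n)(K_0)\ra 0$.

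The central observation is that every point of $K_0$, when viewed inside~$\mathrm A^1$, is of type~I, while the Gauss point $\xi(R)$ is of type~II; in particular $\xi(R)\notin K_0$. Fixing $\eps>0$, I would cover~$K_0$ by three pieces:
\[ K_0\subset F_\eps^-\cup A_\eps\cup F_\eps^+, \]
where $F_\eps^-=\{\abs{T}\leq R-\eps\}$ and $F_\eps^+=\{\abs{T}\geq R+\eps\}\cup\{\infty\}$ are closed subsets of~$\mathrm P^1$, and $A_\eps=K_0\cap\{R-\eps\leq\abs{T}\leq R+\eps\}$. Since $K_0$ is locally compact, $A_\eps$ is closed and bounded in~$K_0$, hence compact; as the topology induced on~$K_0$ by the embedding into~$\mathrm A^1$ coincides with its valuation topology, $A_\eps$ remains compact in~$\mathrm A^1$.

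Each of the three pieces is closed in~$\mathrm P^1$ and avoids~$\xi(R)$. A standard weak-$*$ argument (Urysohn producing a continuous function vanishing at~$\xi(R)$ and equal to~$1$ on a given closed set, combined with the upper-semicontinuity of closed-set mass under weak limits) would then give $\limsup_n\nu(f_n)(F)\leq\delta_{\xi(R)}(F)=0$ for $F=F_\eps^-$, $F=A_\eps$, and $F=F_\eps^+$. Summing these three bounds would yield $\nu(f_n)(K_0)\ra 0$, and hence the corollary.

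The only slightly delicate step is the compactness of the annular slice~$A_\eps$ inside~$\mathrm A^1$; this amounts to identifying the Berkovich topology on $K$-rational points with the valuation topology, which is standard. Everything else is a routine consequence of the weak-$*$ convergence already established in Proposition~\ref{prop.3}.
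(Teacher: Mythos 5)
Your argument is correct and takes essentially the same route as the paper, whose proof is a two-line remark: if $\nu(f_n)(K_0)$ did not tend to~$0$, the limit measure $\delta_{\xi(R)}$ would charge~$K_0$, but $\xi(R)\not\in K_0$. Your decomposition into the two closed sets $F_\eps^{\pm}$ and the compact slice $A_\eps$ merely spells out the step the paper leaves implicit, namely that local compactness of~$K_0$ makes the relevant pieces of~$K_0$ closed in~$\mathrm P^1$, so that the portmanteau inequality $\limsup_n\nu(f_n)(F)\leq\delta_{\xi(R)}(F)=0$ can be applied.
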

\begin{proof}
The conclusion is that
 $\mu_{n}(K_0)\ra 0$. If it didn't hold, the limit measure
of the sequence~$(\mu_n)$ would charge~$K_0$.
But $\xi(R)\not\in K_0$.
\end{proof}

\begin{coro}\label{coro-3.2}
Let $K_0$ be a locally  compact subfield of~$K$.
Let $d$ be a positive integer. When $n\ra\infty$,
the number of irreducible factors in~$K_0[T]$ of the polynomial~$f_n$
whose degree is~$\leq d$ is $\mathrm o(k_n)$.
\end{coro}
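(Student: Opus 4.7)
The plan is to upgrade the preceding corollary (the case $d=1$) by replacing $K_0$ with the set of all algebraic elements of~$\bar K$ of degree~$\leq d$ over~$K_0$. Put
\[ L=\{\alpha\in\bar K:[K_0(\alpha):K_0]\leq d\}. \]
Since $K_0$ is a local field, only finitely many finite extensions of~$K_0$ of degree~$\leq d$ exist up to isomorphism, and each is a complete, locally compact field, embedded isometrically into~$\bar K$ by uniqueness of valuation extension. Consequently $L$ is a finite union of locally compact subfields of~$\bar K$, and $L\cap E(0,M)$ is compact for every $M>0$. Let $L'\subset\mathrm P^1_K$ denote the image of~$L$, \ie\ the set of Type~I points whose $\mathrm{Gal}(\bar K/K)$-orbit meets~$L$.

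The combinatorial reduction is then straightforward. Write $f_n=c\prod_j h_j^{m_j}$ as a product of distinct monic irreducibles in $K_0[T]$, let $e_j=\deg h_j$, and set $S_d=\{j:e_j\leq d\}$. For $j\in S_d$ every root of~$h_j$ lies in~$L$, so each $K$-irreducible factor of~$h_j$ corresponds to a Type~I point in~$L'$. Summing the masses these points contribute to~$\nu(f_n)$ yields
\[ k_n\,\nu(f_n)(L')\geq\sum_{j\in S_d}m_je_j\geq\sum_{j\in S_d}m_j, \]
which bounds the number of $K_0$-irreducible factors of~$f_n$ of degree~$\leq d$; it therefore suffices to prove $\nu(f_n)(L')\ra0$.

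The main obstacle is the topological claim that $L'\cup\{\infty\}$ is closed in~$\mathrm P^1_K$. Granting it, the proof finishes quickly: $\xi(R)$ is of Type~II or~III, so lies neither in~$L'$ nor equals~$\infty$; Proposition~\ref{prop.3} gives $\nu(f_n)\ra\delta_{\xi(R)}$ for the weak-$*$ topology, and upper semicontinuity on closed sets forces
\[ \limsup_n\nu(f_n)(L'\cup\{\infty\})\leq\delta_{\xi(R)}(L'\cup\{\infty\})=0. \]
Since $\nu(f_n)(\{\infty\})=0$ (as $f_n$ is a polynomial), we obtain $\nu(f_n)(L')\ra0$. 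To prove the closedness, I take a net $(q_n)$ in~$L'$ with Berkovich limit $q\neq\infty$ and representatives $\alpha_n\in L$. The values $\abs T(q_n)$ remain bounded, so by pigeonhole on the finitely many constituent extensions and compactness of their intersection with~$E(0,M)$ I extract a subnet with $\alpha_n\ra\alpha\in L$ in~$\bar K$; continuity of $\bar K\to\mathrm P^1_K$ and Hausdorffness of~$\mathrm P^1_K$ then identify $q$ with the Type~I point attached to~$\alpha$, hence $q\in L'$.
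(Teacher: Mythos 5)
Your proof is correct and rests on the same idea as the paper's: the roots of the degree-$\leq d$ factors all lie in a closed subset of the Berkovich line built from finitely many locally compact extensions of~$K_0$, a set that misses the Gau\ss\ point $\xi(R)$, so weak-$*$ convergence of $\nu(f_n)$ to $\delta_{\xi(R)}$ kills their total mass. The paper gets there a bit more economically by passing to the compositum $K_d$ of all extensions of~$K_0$ of degree~$\leq d$ --- a single finite, hence locally compact, extension --- and then simply invoking the preceding corollary for~$K_d$, instead of handling the finite union~$L$ and redoing the closedness/portmanteau argument by hand; note also that both your finiteness claim and the paper's rely on $K_0$ having residue characteristic~$0$ issues aside, i.e.\ on Krasner-type finiteness of extensions of bounded degree.
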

\begin{proof}
Replacing $K$ by the completion of an algebraic closure, we assume
that it is algebraically closed.
Let $K_d\subset K$ be the extension of~$K_0$ (in a fixed algebraic closure
of~$K$) generated by all roots 
of all polynomials of degree~$\leq d$ in $K_0[T]$.
It is well known that $K_d$ is a finite extension of~$K_0$.
In particular, it is a locally compact subfield,
hence the result follows from the first corollary.
\end{proof}

\begin{coro}\label{coro-3.3}
Let $K_0$ be a locally  compact subfield of~$K$ and 
assume that the polynomials~$f_n$ belong to~$K_0[T]$.
When $n\ra\infty$, the maximal degree of an irreducible factor of~$f_n$
tends to~$+\infty$.
\end{coro}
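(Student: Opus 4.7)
The plan is a contradiction argument combining Corollary~\ref{coro-3.2} with a pigeonhole bound on the number of factors. The intuition is that if every irreducible factor of~$f_n$ has degree at most some fixed~$d$, then $f_n$ must have at least $k_n/d$ irreducible factors (counted with multiplicity), so the number of such factors would grow linearly in~$k_n$; but Corollary~\ref{coro-3.2} forces this number to be $\mathrm o(k_n)$.

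Concretely, I would assume for a contradiction that the maximal degree of an irreducible factor of~$f_n$ does \emph{not} tend to~$+\infty$. Then, after extracting a subsequence still denoted~$(f_n)$, there exists a positive integer~$d$ such that every irreducible factor of~$f_n$ in~$K_0[T]$ has degree $\leq d$. Writing
\[ f_n = a_n \prod_i g_{n,i}^{e_{n,i}} \]
for the factorization of~$f_n$ into pairwise distinct monic irreducible factors $g_{n,i}\in K_0[T]$ with multiplicities $e_{n,i}\geq 1$, the bound $\deg(g_{n,i})\leq d$ gives
\[ k_n = \sum_i e_{n,i}\deg(g_{n,i}) \leq d\sum_i e_{n,i}, \]
so the total number $M_n=\sum_i e_{n,i}$ of irreducible factors (counted with multiplicities) satisfies $M_n\geq k_n/d$. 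Applying Corollary~\ref{coro-3.2} to this~$d$ yields $M_n=\mathrm o(k_n)$, which contradicts the linear lower bound as soon as $n$ is large enough.

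The only mild subtlety, and thus the main point requiring care, is that Corollary~\ref{coro-3.2} must be invoked in the form counting irreducible factors \emph{with multiplicities}. This is precisely what its proof gives: an irreducible factor of degree~$\delta\leq d$ and multiplicity~$e$ in~$f_n$ splits over the finite extension~$K_d$ of~$K_0$ and contributes $e\delta\geq e$ roots to the number of $K_d$-rational roots of~$f_n$ counted with multiplicity, and Corollary~3.1 applied over the locally compact field~$K_d$ bounds this root count by $\mathrm o(k_n)$. No ingredient beyond this bookkeeping is needed.
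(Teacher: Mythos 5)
Your proof is correct and is essentially the paper's own argument: the paper likewise argues by contradiction, extracts a subsequence along which all irreducible factors have degree at most some fixed~$d$, and invokes Corollary~\ref{coro-3.2}. You merely make explicit the pigeonhole step (at least $k_n/d$ factors, counted with multiplicity, versus the $\mathrm o(k_n)$ bound) and the fact that the count in Corollary~\ref{coro-3.2} is with multiplicities, both of which the paper leaves implicit.
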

\begin{proof}
Otherwise, up to replacing the sequence~$(f_n)$ by a subsequence of it,
the irreducible factors of~$f_n$ would have a uniformly bounded degree,
which contradicts the previous corollary.
\end{proof}

Let us explicit the particular case where, for each integer~$n$,
the polynomial~$f_n$ is the degree~$n$ truncation 
of a fixed power series $f=\sum_j a_jT^j$ with coefficients
in a locally compact $p$-adic field.
\begin{theo}\label{exem-3.4}
Let $K$ be a finite extension of~$\Q_p$ and let 
$f=\sum_{j=0}^\infty a_j T^j$ be a power series with coefficients
in~$K$.
Let $R=(\limsup_j \abs{a_j}^{1/j})^{-1}$ be its radius of convergence.
Let us assume that $0<R<\infty$ and that $f\in K\{R^{-1}T\}$.

For each integer~$n$, let $f_n=\sum_{j=0}^n a_jT^j$.
Let $(n_k)_{k\geq 0}$ be a increasing sequence of integers such that
$\abs{a_{n_k}}^{1/n_k}$ tends to~$1/R$ when $k\ra\infty$.
Then, the number of irreducible factors
of~$f_{k_n}$ of bounded degree is negligible before~$n_k$,
and the maximum degree of an irreducible factor of~$f_{n_k}$
tends to infinity when $k\ra\infty$.
\end{theo}

We conclude this paper by the following promised construction,
which shows that in hypothesis~(2) of Proposition~\ref{prop.3},
the closed disk~$E(0,R)$ cannot be replaced by the open disk~$D(0,R)$,
and that Theorem~\ref{exem-3.4} does not hold for arbitrary
power series of radius of convergence~$R$.

\begin{exem}\label{contrex}
We begin with the simple following observation: 
for any degree~$m$ polynomial $f\in\Z[T]$,
and any integer $n>m+1$, 
there exists a unique monic polynomial~$F\in\Z[T]$ 
of degree~$n$
such that $F\equiv f\mod {T^{m+1}}$
and $F\equiv 0 \mod {(T-1)^{n-m-1}}$.
Indeed, write $F=f+T^{m+1}g(T-1)+T^n$, 
where the unknown polynomial~$g\in\Z[T]$
has degree~$\leq n-m-2$; the condition on~$F$ translates into
the condition that $g(T)$ is congruent modulo~$T^{n-m-1}$ to
the power series with integer coefficients
given by the expansion of $-(f(1+T)+(1+T)^n)/(1+T)^{m+1}$.
The existence and uniqueness of~$F$ follows at once.

Thanks to this observation, we may construct
by induction a sequence~$(F_n)$ of monic polynomials
with integer coefficients such that 
$d_n=\deg(F_n)=2^{n+1}-2$ such that 
$F_{n+1}(T)\equiv F_n (T)\mod {T^{d_n-1}}$
and $F_n(T)$ vanishes at order at least~$2^n-1$ at $T=1$.
   
It follows that there exists a power series~$f$
such that, for any integer~$n\geq 0$, the polynomial~$F_n$ 
is the truncation in degree~$d_n$ of~$f$.

Fix a prime number~$p$ and view the power series~$f$
as a power series with $p$-adic coefficients.
Its radius of convergence is equal to~$1$.

The sequence~$(F_n)$ satisfies Hypotheses~(1)
and~(3) of Proposition~\ref{prop.3}.
Moreover, $F_n$ converges to~$f$, uniformly on any compact
subset of the open disk~$D(0,1)$.
However, any limit measure~$\nu$
of the sequence~$\nu(F_n)$ satisfies $\nu\geq\frac12\delta_1$.
In particular, $\nu\neq \delta_{\xi(1)}$, so that $(F_n)$
does not satisfies the conclusion of Proposition~\ref{prop.3}.

Moreover, for any~$n$, $F_n$ has at least $\frac12 \deg(F_n)$ 
irreducible factors of degree~$1$,
so that the conclusion of Theorem~\ref{exem-3.4} does not hold for~$f$
neither.
\end{exem}

It remains an interesting open question to find more
general hypotheses on a power series~$f$ of given radius of convergence~$R$
which would guarantee that, in adequate subsequences,
the measures~$\nu(f_n)$ equidistribute towards the Gauss point~$\xi(R)$.

\bibliographystyle{amsplain}
\bibliography{aclab,acl,jentzsch}
\end{document}